\theoremstyle{plain}
\newtheorem{thm}{Theorem}[section]
\newtheorem{lem}{Lemma}[section]
\newtheorem{cor}{Corollary}[section]
\newtheorem{prop}{Proposition}[section]
\newtheorem{thma}{Theorem}
\theoremstyle{proof}
\numberwithin{equation}{section}
\begin{document} 
\title[Sums of integral squares]{Sums of integral squares in certain complex biquadratic fields}
\author{Srijonee Shabnam Chaudhury}
\address{ Srijonee Shabnam Chaudhury
@Harish-Chandra Research Institute,
Chhatnag Road, Jhunsi,  Allahabad 211 019, India.}
\email{srijoneeshabnam@hri.res.in}
\keywords{Complex biquadratic field, Sums of squares}
\subjclass[2010] {11E25, 11R16, 11R33}
\maketitle
\begin{abstract}
Let $K$ be an algebraic number field and $\mathcal{O}_{K}$ be its ring of integers. Let ${\mathcal{S}}_{K}$ be the set of  elements $\alpha \in \mathcal{O}_{K}$ which are sums of squares in $\mathcal{O}_{K}$  and $s(\mathcal{O}_{K})$ the minimal number of squares necessary to represent $-1$ in $\mathcal{O}_{K} $. Let $g( \mathcal{S}_{K} )$ be the smallest positive integer $t$ such that every element in $\mathcal{S}_{K}$ is a sum of $t$ squares in $\mathcal{O}_{K}$. Here for $K  =  \mathbb{Q} ( \sqrt{-m},\sqrt{n})$, where $m \equiv 3  \pmod {4}$ and $n\equiv 1 \pmod 4$ are two distinct positive square free integers, we prove that  $\mathcal{S}_{K}  = \mathcal{O}_{K}$. We also prove that $g(\mathcal{O}_{K}) \leq 
s(\mathcal{O}_{K})+1$ or $ s(\mathcal{O}_{K})+2$. Applying this, we shows that if  $s(\mathcal{O}_{K})=2$, then $g(\mathcal{O}_{K} )=3$. This work is continuation of a recent study initiated by Zhang and Ji \cite{ZJ}.
\end{abstract}
\maketitle
\section{Introduction}
The sums of integral squares in a number field is one of the fundamental object of study in number theory. Lagrange proved the legendary four square theorem (see \cite{LA}), which states that every positive integer (in $\mathbb{Q}$) is represented by a sum of four squares of integers. On the other hand, Gauss proved that a positive integer can be represented as sum of two squares if and only if all of its prime divisors of the form 3 modulo $4$ occur to an even power in its factorization. 
After this, Gauss and Legendre proved that a positive integer can be represented as sum of three squares if and only if it is not of the form $4^{a} ( 8b+7)$, where $a$ and $b$ are positive integers. It is natural to explore if  these results also hold in non-trivial number fields. 
%It would better to recall that a
An element $\alpha \in K$ is said to be totally positive if $\sigma_i(\alpha)>0$
for all real embeddings $\sigma_i$ of $K$. Siegel \cite{SI21} showed that every positive element  in  $K$ is a sum of four squares in $K$.  He also proved in \cite{SI45} that if all totally positive algebraic
integers are sums of integral squares in a totally real field $K$, then $ K$ is either the rational number field $\mathbb{Q}$ or $\mathbb{Q}(\sqrt{ 5} )$. On the other hand, if K is not totally real then all totally positive algebraic integers are sums of integral squares in $K$ if and only if the discriminant of $K$ is odd. Estes and Hsia \cite{EH} determined all complex quadratic fields in which algebraic integers are expressible as a sum of three integer squares. 
Ji  et al. \cite{JW} in another work 
 classified all the integers that can be expressed as sums of three squares over any imaginary quadratic fields.
 
 We fix up the notations once and for all.
%Let $K , \mathcal{O}_K$ its ring. 
%We define the following:
\begin{itemize}
\item[] $\mathfrak{S}_K$: the set of all elements in $\mathcal{O}_K$ which are squares in $\mathcal{O}_K$. 
\item[] $s(\mathcal{O}_K)$: the minimal number of squares required to represent $-1$ in $\mathcal{O}_K$.
\item[] $g(\mathfrak{S}_K)$:  the smallest positive integer $t$ such that every element in $\mathfrak{S}_K$ is a sum of $t$ squares in $\mathcal{O}_K$. 
\item[] $g(\mathcal{O}_K)$:  the smallest positive integer $t$ such that every element in $\mathcal{O}_K$ is a sum of $t$ squares in $\mathcal{O}_K$. 
\end{itemize}
%In}, 
Zhang and Ji  \cite{ZJ}
considered the field $ K=\mathbb{Q}(\sqrt{ -m},\sqrt{-n})$ for distinct positive square-free integers $m=n=3\pmod 4$, and proved that $\mathfrak{S}_K = \mathcal{O}_K$. They also proved that if $s(\mathcal{O}_K) = 2$, then $g(\mathcal{O}_K) =3$. In this paper, we prove analogous results in the biquadratic field $\mathbb{Q}(\sqrt{-m}, \sqrt{n})$, where $ m \equiv  3 \pmod 4 $ and $ n \equiv 1 \pmod4 $ are distinct square-free positive integers. More precisely, we prove the following results:
\begin{thm}\label{thm1}
 Let $ m \equiv  3 \pmod 4 $ and $ n \equiv 1 \pmod4 $ be two distinct square-free positive integers.
Assume that $K = \mathbb{Q}(\sqrt{-m},\sqrt{n})$. Then $ \mathfrak{S}_{K}  =  \mathcal{O}_{K}$.
\end{thm}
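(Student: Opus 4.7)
The plan is to fix an explicit $\mathbb{Z}$-basis of $\mathcal{O}_K$ and then show that each basis element, together with $-1$, lies in $\mathfrak{S}_K$. Once this is done, $\mathfrak{S}_K=\mathcal{O}_K$ follows from two soft facts: $\mathfrak{S}_K$ is obviously closed under addition, and the Diophantus identity
$$\Bigl(\sum_i a_i^{2}\Bigr)\Bigl(\sum_j b_j^{2}\Bigr)=\sum_{i,j}(a_ib_j)^{2}$$
makes it multiplicatively closed as well. Combined with $-1\in\mathfrak{S}_K$, this yields closure under negation (hence under arbitrary $\mathbb{Z}$-scaling), so any $\mathbb{Z}$-linear combination of the basis witnesses stays in $\mathfrak{S}_K$.

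\textbf{Step 1 (integral basis).} Put $\alpha:=\tfrac{1+\sqrt{-m}}{2}$ and $\beta:=\tfrac{1+\sqrt{n}}{2}$. Because $m\equiv 3$ and $n\equiv 1\pmod 4$, one has $-m\equiv n\equiv 1\pmod 4$, and after reducing by $d:=\gcd(m,n)$ also $-mn/d^{2}\equiv 1\pmod 4$; each of the three quadratic subfields thus has a ring of integers of the form $\mathbb{Z}[\tfrac{1+\sqrt{D}}{2}]$. The conductor–discriminant formula gives $|\operatorname{disc}(K)|=m^{2}n^{2}/d^{2}$. A Vandermonde/Hadamard calculation then identifies the integer ring: when $d=1$, the order $\mathbb{Z}\langle 1,\alpha,\beta,\alpha\beta\rangle$ already has discriminant $m^{2}n^{2}$ and so equals $\mathcal{O}_K$; when $d>1$, one adjoins the extra integer $\gamma:=\tfrac{1+\sqrt{-mn/d^{2}}}{2}$ (with the analogous minor adjustment of the fourth generator) to drop the discriminant down to the correct value.

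\textbf{Step 2 ($-1$ as a sum of squares).} A direct calculation gives the key identity
$$\alpha^{2}+(1-\alpha)^{2}=\tfrac{1-m}{2},$$
exhibiting the negative odd integer $\tfrac{1-m}{2}$ as a sum of two squares in $\mathcal{O}_K$. Since $\tfrac{m-3}{2}\in\mathbb{Z}_{\geq 0}$ is a sum of at most four rational squares by Lagrange's four-square theorem, $-1=\tfrac{1-m}{2}+\tfrac{m-3}{2}$ is a sum of at most six squares in $\mathcal{O}_K$; in particular $s(\mathcal{O}_K)<\infty$.

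\textbf{Step 3 (basis witnesses).} For each of $\alpha$, $\beta$, and any additional generator $\gamma=\tfrac{1+\sqrt{D}}{2}$ from Step 1, the minimal polynomial furnishes an identity of the form $\theta=\theta^{2}+c$ with $c\in\mathbb{Z}$. Specifically, $\alpha=\alpha^{2}+\tfrac{m+1}{4}$ and $\beta=\beta^{2}+\tfrac{1-n}{4}$. When $c\geq 0$, Lagrange places $c$ in $\mathfrak{S}_K$; when $c<0$, writing $c=(-1)\cdot|c|$ and invoking Step 2 places $c$ in $\mathfrak{S}_K$ as well. Hence $\alpha,\beta,\gamma\in\mathfrak{S}_K$. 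Any remaining basis element (for instance $\alpha\beta$ in the coprime case) is a product of elements of $\mathfrak{S}_K$ and so lies in $\mathfrak{S}_K$ by multiplicative closure.

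\textbf{Step 4 (assembly).} Every $\xi\in\mathcal{O}_K$ is a $\mathbb{Z}$-linear combination of the four basis elements. Each summand is an integer scalar multiple of an element of $\mathfrak{S}_K$; negative signs are absorbed by multiplying with the sum-of-squares representation of $-1$ from Step 2; finally, additive closure of $\mathfrak{S}_K$ gives $\xi\in\mathfrak{S}_K$. This establishes $\mathfrak{S}_K=\mathcal{O}_K$.

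\emph{Expected main obstacle.} The technical weight of the argument lies in Step 1: nailing down a genuine integral basis across the two regimes $\gcd(m,n)=1$ and $\gcd(m,n)>1$, and packaging the fourth generator so that the identity $\theta=\theta^{2}+c$ (or a product of already-handled witnesses) delivers the required sum-of-squares decomposition uniformly. Once the basis is correctly in hand, the square-theoretic content is formal.
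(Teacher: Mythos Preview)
Your approach is correct and essentially identical to the paper's: both argue that $\mathfrak{S}_K$ is a subring, place $-1$ and hence all of $\mathbb{Z}$ inside it, and then push each generator $\theta=\tfrac{1+\sqrt{D}}{2}$ of an integral basis into $\mathfrak{S}_K$ via the identity $\theta=\theta^{2}+c$ with $c\in\mathbb{Z}$. The only differences are cosmetic---the paper cites Moser's theorem for $-1\in\mathfrak{S}_K$ in place of your direct Step~2 computation, and it disposes of your acknowledged Step~1 obstacle by working uniformly (including the case $\gcd(m,n)>1$) with the single basis $\bigl\{1,\ \tfrac{(1+\sqrt{-m})(1+\sqrt{\ell})}{4},\ \tfrac{1+\sqrt{-m}}{2},\ \tfrac{1+\sqrt{n}}{2}\bigr\}$, $\ell=-mn/\gcd(m,n)^{2}$, proved to be integral via trace computations down to the three quadratic subfields.
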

\begin{thm}\label{thm2}
 Let $K = \mathbb{Q}(\sqrt{-m},\sqrt{n})$ be as in Theorem \ref{thm1}.   Then
$$
g(\mathcal{O}_{K}) \leq \begin{cases}
s(\mathcal{O}_{K})+1 & \text{ if } s(\mathcal{O}_{K}) \text{ is even},\\
s(\mathcal{O}_{K})+2 & \text{ if } s(\mathcal{O}_{K}) \text{ is odd.}
\end{cases}
$$
\end{thm}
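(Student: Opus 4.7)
Set $s = s(\mathcal{O}_K)$ and fix a representation
$-1 = x_1^2 + \cdots + x_s^2$ with $x_i \in \mathcal{O}_K$, which exists thanks to Theorem~\ref{thm1}. The strategy hinges on the workhorse identity
\begin{equation*}
4\alpha = (\alpha+1)^2 - (\alpha-1)^2 = (\alpha+1)^2 + \sum_{i=1}^{s}\bigl((\alpha-1)x_i\bigr)^2,
\end{equation*}
which exhibits $4\alpha$ as a sum of $s+1$ squares. If one can further show that every $\alpha \in \mathcal{O}_K$ admits a decomposition $\alpha = \beta^2 + 4\gamma$ with $\beta,\gamma \in \mathcal{O}_K$, then substitution yields
\begin{equation*}
\alpha = \beta^2 + (\gamma+1)^2 + \sum_{i=1}^{s}\bigl((\gamma-1)x_i\bigr)^2,
\end{equation*}
a sum of $s+2$ squares, settling the odd-$s$ case.

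The substantive step is therefore the \emph{decomposition lemma}: surjectivity of the squaring map $\mathcal{O}_K \to \mathcal{O}_K/4\mathcal{O}_K$. Using the integral basis $\{1,\omega,\eta,\omega\eta\}$ of $\mathcal{O}_K$ with $\omega = (1+\sqrt{-m})/2$ and $\eta = (1+\sqrt{n})/2$, the congruences $m \equiv 3$ and $n \equiv 1 \pmod 4$ guarantee that the minimal polynomial relations
\begin{equation*}
\omega^2 = \omega - \tfrac{1+m}{4}, \qquad \eta^2 = \eta + \tfrac{n-1}{4}
\end{equation*}
have integer constants. I would compute the squares of the sixteen residue representatives $a + b\omega + c\eta + d\omega\eta$, $a,b,c,d \in \{0,1\}$, and verify that their images modulo $4\mathcal{O}_K$ exhaust a full set of coset representatives.

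To obtain the sharper bound $s+1$ when $s$ is even, I would refine the decomposition so as to absorb one summand. The idea is to pair the level terms as $-1 = \sum_{j=1}^{s/2}(x_{2j-1}^2 + x_{2j}^2)$ and invoke the Brahmagupta--Fibonacci identity: for a judicious choice of $\beta$ and $\gamma$, the pair $\beta^2 + (\gamma+1)^2$ should combine with one of the summands $((\gamma-1)x_i)^2$ into a single square, dropping the total count to $s+1$.

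The principal obstacle will be the decomposition lemma itself: the case-analysis modulo $4\mathcal{O}_K$ is finite but intricate, and the hypotheses $m \equiv 3$, $n \equiv 1 \pmod 4$ enter essentially through the integer coefficients appearing in the relations for $\omega^2$ and $\eta^2$. The even-$s$ refinement is less conceptually novel but still requires identifying the correct combinatorial identity that collapses the extra square.
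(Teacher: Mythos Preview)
Your approach has a fatal gap in the ``decomposition lemma.'' You claim that squaring induces a surjection $\mathcal{O}_K \to \mathcal{O}_K/4\mathcal{O}_K$, but this is impossible for a simple counting reason: if $\beta \equiv \beta' \pmod{2\mathcal{O}_K}$ then $\beta^2 - \beta'^2 = (\beta-\beta')(\beta+\beta') \in 4\mathcal{O}_K$, so the map $\beta \mapsto \beta^2 \bmod 4$ factors through $\mathcal{O}_K/2\mathcal{O}_K$. Hence its image has at most $\lvert\mathcal{O}_K/2\mathcal{O}_K\rvert = 16$ elements, while $\lvert\mathcal{O}_K/4\mathcal{O}_K\rvert = 256$. (This is also why your proposed verification with sixteen representatives could never succeed: those sixteen squares are \emph{all} the squares mod $4$, and they miss almost everything.) Consequently the decomposition $\alpha = \beta^2 + 4\gamma$ is not available in general, and your route to the $s+2$ bound collapses. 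Separately, your plan for the even-$s$ refinement is not workable as stated: the Brahmagupta--Fibonacci identity rewrites a \emph{product} of two sums of two squares as a single sum of two squares, but in your expression the terms $\beta^2,(\gamma+1)^2,((\gamma-1)x_i)^2$ are added, not multiplied, so there is no evident way to collapse one of them.

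The paper takes a different and cleaner route that sidesteps both issues. It first invokes Theorem~\ref{thm1} to write $\alpha = \sum_i\beta_i^2$ in $\mathcal{O}_K$ and then uses the polynomial identity (with $S=\sum_i\beta_i$ and $P=\sum_{i\le j}\beta_i\beta_j$)
\[
(S+P+1)^2 - (S+P)^2 - (S+1)^2 \;=\; 2P - S^2 \;=\; \sum_i\beta_i^2 \;=\; \alpha
\]
to obtain directly $\alpha=\gamma_1^2-(\gamma_2^2+\gamma_3^2)$ with $\gamma_i\in\mathcal{O}_K$, no mod-$4$ analysis needed. Writing $-1=\sum_{i=1}^{s}\epsilon_i^2$, one has $-(\gamma_2^2+\gamma_3^2)=\bigl(\sum_i\epsilon_i^2\bigr)(\gamma_2^2+\gamma_3^2)$, and now Brahmagupta applies legitimately to each product $(\epsilon_{2j-1}^2+\epsilon_{2j}^2)(\gamma_2^2+\gamma_3^2)$, converting it to two squares. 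This yields $s$ squares for that part when $s$ is even (hence $s+1$ total), and $s+1$ squares when $s$ is odd (the leftover $\epsilon_s^2(\gamma_2^2+\gamma_3^2)$ contributing two, hence $s+2$ total).
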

 %As a consequence:
 As an application of these two results one has the following:
\begin{cor}\label{cor1}
 Let $K = \mathbb{Q}(\sqrt{-m},\sqrt{n})$ be as in Theorem \ref{thm1}. If $s( \mathcal{O}_{K}) = 2$, then $ g(\mathcal{O}_{K}) = 3$.
\end{cor}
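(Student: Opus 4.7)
The plan is to combine the upper bound from Theorem \ref{thm2} with a lower bound obtained via a Chebotarev argument and a local valuation obstruction. The upper bound is immediate: since $s(\mathcal{O}_K)=2$ is even, Theorem \ref{thm2} yields $g(\mathcal{O}_K)\le s(\mathcal{O}_K)+1=3$.

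For the lower bound, I would first observe that $i\notin K$: the three quadratic subfields of the biquadratic field $K$ are $\mathbb{Q}(\sqrt{-m})$, $\mathbb{Q}(\sqrt n)$, and $\mathbb{Q}(\sqrt{-mn})$, and under the hypotheses on $m,n$ none of $-m$, $n$, $-mn$ is $-1$ times a rational square. Thus $K(i)/K$ is a nontrivial quadratic extension with $K\cap\mathbb{Q}(i)=\mathbb{Q}$, so $\mathrm{Gal}(K(i)/\mathbb{Q})\cong(\mathbb{Z}/2)^3$. The identity $\xi^2+\eta^2=N_{K(i)/K}(\xi+i\eta)$ identifies sums of two squares in $K$ with the image of the norm map $N_{K(i)/K}\colon K(i)^\times\to K^\times$, so it suffices to produce an element of $\mathcal{O}_K$ that is not such a norm.

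Next I would apply the Chebotarev density theorem to $K(i)/\mathbb{Q}$ to find (infinitely many) rational primes $p$ whose Frobenius is trivial on $K$ and nontrivial on $\mathbb{Q}(i)$; such $p$ split completely in $K$ and satisfy $p\equiv 3\pmod 4$. Fix such a $p$ and a prime $\mathfrak{p}$ of $\mathcal{O}_K$ above $p$, so $\mathcal{O}_K/\mathfrak{p}\cong\mathbb{F}_p$. Since $-1$ is a nonsquare in $\mathbb{F}_p$, $X^2+1$ is irreducible mod $\mathfrak{p}$, and Hensel's lemma gives that $K_\mathfrak{p}(i)/K_\mathfrak{p}$ is an unramified inert quadratic extension of local fields. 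Letting $w$ be the unique prime of $K(i)$ above $\mathfrak{p}$ and $\sigma$ the nontrivial Galois involution of $K(i)/K$, we have $\sigma(w)=w$, so
\[
v_\mathfrak{p}\bigl(N_{K(i)/K}(\beta)\bigr)=v_w(\beta)+v_w(\sigma(\beta))=2\,v_w(\beta)
\]
for every $\beta\in K(i)^\times$; in particular, norms from $K(i)$ have even $\mathfrak{p}$-adic valuation.

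Any uniformizer $\pi\in\mathfrak{p}\setminus\mathfrak{p}^2$ then lies in $\mathcal{O}_K$ with $v_\mathfrak{p}(\pi)=1$, so $\pi$ is not a norm from $K(i)/K$, hence not a sum of two squares in $K$, let alone in $\mathcal{O}_K$. This gives $g(\mathcal{O}_K)\ge 3$ and, combined with the upper bound, $g(\mathcal{O}_K)=3$. The step I expect to be the main ingredient beyond Theorem \ref{thm2} is the Chebotarev selection of a rational prime $p\equiv 3\pmod 4$ of residue degree one in $K$; once that prime is in hand, the rest is the standard valuation obstruction to being a norm from an unramified inert quadratic extension.
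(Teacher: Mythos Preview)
Your argument is correct, but it diverges from the paper's at the lower bound. The paper also gets $g(\mathcal O_K)\le 3$ from Theorem~\ref{thm2}, but to rule out $g(\mathcal O_K)=2$ it works at a prime $\mathcal P$ of $K$ above~$2$: it notes that $\mathcal P$ ramifies in $L=K(i)$, invokes local class field theory to get $[K_{\mathcal P}^\times:N(L_{\mathcal Q}^\times)]=2$, and then uses density of $K$ in $K_{\mathcal P}$ together with Lemma~\ref{loc} to show that ``every $\alpha\in\mathcal O_K$ is a sum of two squares'' would force the local norm map to be surjective. Your route instead manufactures an \emph{odd} prime $\mathfrak p$ (via Chebotarev) at which $K_\mathfrak p(i)/K_\mathfrak p$ is the unramified quadratic extension, and reads off the obstruction as parity of $v_\mathfrak p$.

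Each approach has its own economy. The paper avoids any existence/density theorem for primes by working at the fixed place~$2$, at the cost of citing local class field theory and the $2$-adic approximation lemma. Your approach makes the local step completely elementary (the norm from an unramified quadratic local extension has even valuation), but pays for it with Chebotarev; note that you could downgrade this to Dirichlet on arithmetic progressions, since the conditions $p\equiv 3\pmod 4$, $\bigl(\tfrac{-m}{p}\bigr)=1$, $\bigl(\tfrac{n}{p}\bigr)=1$ are congruence conditions modulo $4mn$, and your observation $i\notin K$ guarantees they are simultaneously satisfiable.
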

Applying these results our final result is:
\begin{thm}
\label{cor2}
Let $K = \mathbb{Q}(\sqrt{-m},\sqrt{n})$ be as in theorem \ref{thm1}. For distinct primes $p,q$ and $r$, the following hold:
\begin{itemize}
\item[(I)] If $m = p$ with  $p \equiv 3\pmod 8$, then  $g(\mathcal{O}_{K}) = 3$.
\item[(II)] If $m =pq$ with $p \equiv 3 \pmod 8, q \equiv 1 \pmod 8$  and $\left( \frac{p}{q} \right) = -1$, the  $g(\mathcal{O}_{{K}})=3$.
\item[(III)] If $m = pqr$ with  $p\equiv q \equiv r \equiv 3\pmod 8$ and  $\left(\frac{p}{q}\right) = \left( \frac{q}{r}\right) = \left(\frac{r}{p}\right) = -1$, then $g(\mathcal{O}_K)= 3$.
\end{itemize}
\end{thm}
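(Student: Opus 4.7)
The plan is to apply Corollary~\ref{cor1}: since it gives $g(\mathcal{O}_K)=3$ as soon as $s(\mathcal{O}_K)=2$, each of (I)--(III) reduces to verifying $s(\mathcal{O}_K)=2$. The lower bound $s(\mathcal{O}_K)\geq 2$ is automatic---under the hypotheses $m\equiv 3\pmod{4}$, $n\equiv 1\pmod{4}$ on distinct positive square-free integers $m,n$, none of the three radicands $-m$, $n$, $-mn$ equals $-1$ times a rational square, so $\mathbb{Q}(i)$ is not a subfield of $K$ and $-1$ is not a square in $\mathcal{O}_K$.

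For the upper bound I would search for $-1$ as a sum of two squares already inside the subring $\mathbb{Z}[\tau]\subseteq\mathcal{O}_K$, where $\tau=(1+\sqrt{-m})/2$ satisfies $\tau^2=\tau-\frac{m+1}{4}$. Using the symmetric ansatz $\alpha=r_1+s\tau$, $\beta=r_2+s\tau$ with $r_1,r_2,s\in\mathbb{Z}$, expanding $\alpha^2+\beta^2=-1$ via the minimal polynomial of $\tau$ and matching coefficients yields the pair $r_1+r_2=-s$ and $r_1^2+r_2^2=\frac{(m+1)s^2}{2}-1$; eliminating $r_1,r_2$ through the identity $(r_1-r_2)^2=2(r_1^2+r_2^2)-(r_1+r_2)^2$ collapses the problem to the single Pell-type equation
\[
t^{2}-m\,s^{2}=-2,
\]
with the parity $t\equiv s\pmod{2}$ needed for integrality of $r_1=(-s+t)/2$, $r_2=(-s-t)/2$ being forced automatically by $m\equiv 3\pmod{4}$. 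Thus the whole theorem reduces to integer solvability of $t^2-ms^2=-2$ in each case, which is the genus-theoretic question of whether the ramified prime above $2$ in $\mathcal{O}_{\mathbb{Q}(\sqrt{m})}$ has a generator of norm $-2$.

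I would then handle (I)--(III) via Gauss/R\'edei genus theory for $\mathbb{Q}(\sqrt{m})$. In (I), $m=p\equiv 3\pmod{8}$ makes $-2$ a quadratic residue modulo $p$, and a standard Gauss-genus argument for $\mathbb{Q}(\sqrt{p})$ places the prime above $2$ in the principal class, providing the desired generator. In (II) with $m=pq$, the condition $(p/q)=-1$ together with the congruences $p\equiv 3\pmod{8}$, $q\equiv 1\pmod{8}$ kills the relevant $4$-rank obstruction in the narrow class group of $\mathbb{Q}(\sqrt{pq})$. In (III) with $m=pqr$, an analogous but larger R\'edei-matrix computation shows that the three simultaneous assumptions $(p/q)=(q/r)=(r/p)=-1$ together with $p\equiv q\equiv r\equiv 3\pmod{8}$ force the prime above $2$ into the principal narrow class of $\mathbb{Q}(\sqrt{pqr})$. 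In each case the resulting pair $(s,t)$ produces an explicit $\alpha,\beta\in\mathbb{Z}[\tau]$ with $\alpha^2+\beta^2=-1$, giving $s(\mathcal{O}_K)=2$ and hence $g(\mathcal{O}_K)=3$. The main obstacle is this last step in (II) and (III): case (I) is essentially classical, but (II) and (III) demand careful R\'edei/genus-character bookkeeping, where the prescribed Legendre-symbol conditions are precisely what annihilate the obstructions that would otherwise keep the prime above $2$ out of the principal class.
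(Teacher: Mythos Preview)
Your overall architecture matches the paper's: reduce to $s(\mathcal{O}_K)=2$ via Corollary~\ref{cor1}, then to integer solvability of $t^2-ms^2=-2$, then verify solvability in each of (I)--(III). Your ansatz in $\mathbb{Z}[\tau]$ is a clean re-derivation of exactly what the paper imports as Lemma~\ref{lemmaZJ} from \cite{ZJ}; the paper simply quotes that lemma rather than recomputing it.

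The genuine divergence is in how you establish solvability of $t^2-ms^2=-2$. The paper does \emph{not} invoke genus theory or R\'edei matrices at all. Instead it runs an elementary Legendre-style descent: take the minimal positive solution $(x_0,y_0)$ of $x^2-my^2=1$, factor $(x_0+1)(x_0-1)=my_0^2$, split on the parity of $y_0$, distribute the prime factors of $m$ between $x_0\pm 1$ in all possible ways, and eliminate every case except the one producing $b^2-ma^2=-2$ by extracting a Legendre-symbol contradiction (e.g.\ $(-1/p)=1$ or $(2/p)=1$ against $p\equiv 3\pmod 8$, or $(p/q)=1$ against the hypothesis) or a smaller Pell solution. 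This is entirely self-contained and uses nothing beyond quadratic reciprocity.

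Your genus/R\'edei route would ultimately succeed, but as written it is a sketch rather than a proof: phrases like ``kills the relevant $4$-rank obstruction'' and ``an analogous but larger R\'edei-matrix computation'' are precisely where the work lies, and you have not done it. In particular, getting the \emph{sign} right---you need norm $-2$, not merely that the prime above $2$ is principal---requires tracking the narrow versus ordinary class group and the norm of the fundamental unit, which you gloss over. The paper's descent avoids all of this bookkeeping at the cost of a longer case analysis; your approach would be more conceptual but demands substantially more detail before it counts as complete.
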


\section{Proofs}
We begin with the following result of Moser \cite{MO} which will be used  in the proof of Theorem \ref{thm1}. 
\begin{thma}\label{thmM}
Let $K = \mathbb{Q}(\sqrt{-d})$ be an imaginary quadratic field. Then
$$
 s(K)=\begin{cases}
1 & \mbox{ if } d=1,\\
4 & \text{ if } d \equiv 7 \pmod 8, \\
 2 & \mbox{ otherwise,}
\end{cases}
$$
and 
$$
  s(\mathcal{O}_{K})=\begin{cases}
1 & \mbox{ if } d=1,\\
4 & \mbox{ if } d \equiv 7 \pmod 8, \\
 2 \mbox{ or } 3 &  \mbox{ otherwise.}
\end{cases}
$$
\end{thma}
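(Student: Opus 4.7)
My plan is to treat $s(K)$ and $s(\mathcal{O}_K)$ separately, handling the former by Hasse--Minkowski and the latter by explicit integer constructions. By Pfister's theorem $s(K)$ is a power of $2$ or infinite; since an imaginary quadratic field has no real embedding, $s(K) \leq 4$, pinning $s(K)$ to $\{1, 2, 4\}$. The case $d = 1$ is immediate from $-1 = i^2$. For $d > 1$ one has $i \notin K$, so $s(K) \geq 2$, and the remaining dichotomy $s(K) \in \{2, 4\}$ reduces to the isotropy of the form $\langle 1, 1, 1 \rangle$ over $K$. By Hasse--Minkowski this is a local question: at every archimedean and non-dyadic completion the local level is at most $2$, so the form is automatically isotropic, and the only potential obstruction lies above $2$. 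A direct computation shows that $2$ splits in $\mathcal{O}_K$ if and only if $-d \equiv 1 \pmod 8$, equivalently $d \equiv 7 \pmod 8$, and there $K_v = \mathbb{Q}_2$ has level $4$, making the form anisotropic. In every other case $2$ is inert or ramified, $K_v$ is a proper quadratic extension of $\mathbb{Q}_2$ of level at most $2$, and the form is isotropic. This yields $s(K) = 4$ for $d \equiv 7 \pmod 8$ and $s(K) = 2$ otherwise.

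For $s(\mathcal{O}_K)$, the inequality $s(\mathcal{O}_K) \geq s(K)$ supplies all the lower bounds. The matching upper bound when $d \equiv 7 \pmod 8$ I would obtain from the elementary identity $-1 = (\sqrt{-d})^2 + (d-1)$ combined with the Gauss--Legendre three-square theorem: since $d \equiv 7 \pmod 8$ forces $d - 1 \equiv 6 \pmod 8$, which is not of the form $4^{a}(8b+7)$, the integer $d - 1$ is a sum of three rational integer squares, yielding $-1$ as a sum of four integer squares in $\mathcal{O}_K$. For the generic case $s(K) = 2$, the upper bound $s(\mathcal{O}_K) \leq 3$ would use the analogous identity $-1 = (k\sqrt{-d})^2 + (k^2 d - 1)$ with $k \in \mathbb{Z}$ chosen so that $k^2 d - 1$ is a sum of two rational integer squares, i.e.\ so that no prime $\equiv 3 \pmod 4$ divides $k^2 d - 1$ to an odd power; the two-square theorem combined with a Dirichlet-style search for such $k$ should deliver the required representation.

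The main obstacle is precisely this last step. The upper bound $s(\mathcal{O}_K) \leq 3$ in the generic case is not a formal consequence of $s(K) = 2$: the common denominator appearing in a field-level identity $-1 = (a/c)^2 + (b/c)^2$ need not be a unit of $\mathcal{O}_K$, so one must work inside the ring itself, either through the construction above or by more carefully exploiting the $\mathbb{Z}$-basis $\{1, \omega\}$ (or $\{1, \sqrt{-d}\}$) of $\mathcal{O}_K$ together with the quadratic relation $\omega^2 = \omega - (1+d)/4$ when $d \equiv 3 \pmod 4$. The looseness of Moser's bound (``$2$ or $3$'' rather than a precise value) itself signals that whether $-1$ is already a sum of two integer squares depends on arithmetic strictly finer than the residue class of $d$ modulo $8$.
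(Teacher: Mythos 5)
First, a framing point: the paper does not prove this statement at all --- it is quoted as Theorem A from Moser's note \cite{MO} --- so there is no in-paper argument to compare with, and your attempt must stand on its own. The field-level half of your proposal is essentially sound: Pfister's power-of-two constraint together with $s(K)\le 4$ for a totally imaginary field, the reduction of $s(K)\le 2$ to isotropy of $\langle 1,1,1\rangle$, Hasse--Minkowski, triviality at the complex and non-dyadic places, and the dyadic dichotomy ($2$ splits iff $d\equiv 7\pmod 8$, where $K_v=\mathbb{Q}_2$ has level $4$) are all correct. One assertion is left unjustified, namely that every quadratic extension of $\mathbb{Q}_2$ has level at most $2$; this is true but not trivial (unramified extensions of odd degree still have level $4$), and needs, e.g., the fact that restriction to a quadratic extension kills the class of $(-1,-1)_{\mathbb{Q}_2}$ in the Brauer group, so that $\langle 1,1,1\rangle$ becomes isotropic. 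The integral case $d\equiv 7\pmod 8$, via $-1=(\sqrt{-d})^2+(d-1)$ with $d-1\equiv 6\pmod 8$ and the three-square theorem, is complete and correct.

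The genuine gap is the one you flag yourself: the upper bound $s(\mathcal{O}_K)\le 3$ in the generic case, which is the substantive content of the second display, is never established. Your ansatz $-1=(k\sqrt{-d})^2+a^2+b^2$ requires an integer $k$ with $k^2d-1$ a sum of two rational integer squares, i.e.\ an integral solution of $a^2+b^2-dk^2=-1$; this is a representation problem for the indefinite ternary form $\langle 1,1,-d\rangle$, and it is not accessible to a ``Dirichlet-style search'': Dirichlet's theorem produces primes in progressions but gives no control over the parity of the exponents of primes $\equiv 3\pmod 4$ in the values of the quadratic polynomial $k^2d-1$. A correct treatment needs genus and spinor-genus theory for $\langle 1,1,-d\rangle$ (with attention to possible spinor exceptions --- the relevant quadratic field here is precisely $\mathbb{Q}(\sqrt{-d})$), or Pell-type arguments; note that the paper itself, in the proof of Theorem \ref{cor2}, has to work to solve $x^2-dy^2=-2$ even for quite restricted $d$, which indicates how delicate such solvability statements are. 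Moreover, your special shape (one square purely imaginary, the other two rational) is a priori more restrictive than a general sum of three squares in $\mathcal{O}_K$, which may involve elements such as $\frac{1+\sqrt{-d}}{2}$ when $d\equiv 3\pmod 4$; so even if some $d$ admitted no such $k$, the theorem would not be contradicted, but your route would fail. As it stands, the proposal proves the assertion about $s(K)$ and the case $d\equiv 7\pmod 8$ of the integral assertion, but not the ``$2$ or $3$'' case, and hence does not prove the theorem.
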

An analogous result of the following was proved in \cite{ZJ} by Zhang and Ji and our proof goes along the similar line. We include a proof for the shake of completeness.
%However for the sake of completeness, we give the proof.   
\begin{prop}\label{prop1}
Let $K = \mathbb{Q}(\sqrt{-m},\sqrt{n})$ be as in Theorem \ref{thm1} and  $\ell=-mn/\gcd(m,n)^2$. Then 
$$ 
\mathcal{B}: = \left\{1, \frac{(1+ \sqrt{-m})(1+ \sqrt{\ell})}{4}, \frac{1+\sqrt{-m}}{2},\frac{1+\sqrt{n}}{2} \right\}
$$
is an integral basis for $\mathcal{O}_{K}$.
\end{prop}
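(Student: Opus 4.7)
The plan has two steps: first verify $\mathcal{B}\subset\mathcal{O}_K$, and then check that every $\alpha\in\mathcal{O}_K$ is a $\mathbb{Z}$-linear combination of elements of $\mathcal{B}$. Since $\#\mathcal{B}=4=[K:\mathbb{Q}]$, $\mathbb{Q}$-linear independence of $\mathcal{B}$ is automatic once the spanning property is established, so all that remains is the integrality analysis.

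For the first step, $m$ and $n$ are odd, so $d:=\gcd(m,n)$ is odd and $d^2\equiv 1\pmod 8$. Hence $\ell=-mn/d^2\equiv -(3)(1)\equiv 1\pmod 4$. Consequently $\omega_1:=(1+\sqrt{-m})/2$, $\omega_2:=(1+\sqrt{n})/2$ and $\omega_3:=(1+\sqrt{\ell})/2$ are the canonical generators of the rings of integers of the three quadratic subfields $F_1=\mathbb{Q}(\sqrt{-m})$, $F_2=\mathbb{Q}(\sqrt{n})$, $F_3=\mathbb{Q}(\sqrt{\ell})$ respectively. The only non-obvious basis element is $(1+\sqrt{-m})(1+\sqrt{\ell})/4=\omega_1\omega_3$, which is a product of two algebraic integers and therefore itself in $\mathcal{O}_K$.

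For the second step, write $\alpha\in\mathcal{O}_K$ uniquely as $\alpha=a+b\sqrt{-m}+c\sqrt{n}+e\sqrt{\ell}$ with $a,b,c,e\in\mathbb{Q}$. For each non-trivial $\sigma\in\mathrm{Gal}(K/\mathbb{Q})$, the element $\alpha+\sigma(\alpha)$ is an algebraic integer lying in one of the three fixed subfields $F_i$, hence in $\mathcal{O}_{F_i}$. Combining these three containments with the integral bases $\{1,\omega_i\}$ of each $\mathcal{O}_{F_i}$ forces $4a,4b,4c,4e\in\mathbb{Z}$ together with specific congruence relations modulo $2$. One then rewrites $\alpha$ as a $\mathbb{Q}$-combination of $\mathcal{B}$ using the expansion
$$\beta=\frac{(1+\sqrt{-m})(1+\sqrt{\ell})}{4}=\frac{1+\sqrt{-m}+\sqrt{\ell}+(m/d)\sqrt{n}}{4},$$
which follows from $\sqrt{-m}\sqrt{\ell}=(m/d)\sqrt{n}$. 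Inverting this change of basis and invoking the integrality constraints on $(a,b,c,e)$ then yields integer coordinates on $\mathcal{B}$, completing the argument.

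The principal technical obstacle is the mod-$2$ bookkeeping of the last step. The factor $m/d$ (which is odd, a point one must exploit) couples the $\sqrt{n}$-coordinate to the other three when passing between $\{1,\sqrt{-m},\sqrt{n},\sqrt{\ell}\}$ and $\mathcal{B}$. One has to verify that the parity relations arising from the three trace arguments match exactly those required to render all four coordinates on $\mathcal{B}$ integral, leaving no spurious half-integral solution. This is where the hypotheses $m\equiv 3\pmod 4$, $n\equiv 1\pmod 4$, and the oddness of $d$ are used most sharply.
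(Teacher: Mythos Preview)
Your approach is essentially the same as the paper's: both rely on the three relative traces $\mathrm{Tr}_{K/F_i}(\alpha)\in\mathcal{O}_{F_i}$ to pin down the coefficients. The paper, however, writes $\alpha$ from the outset in the $\mathcal{B}$-coordinates $x_1,\dots,x_4$ and computes the traces there, which avoids your change of basis from $\{1,\sqrt{-m},\sqrt{n},\sqrt{\ell}\}$ to $\mathcal{B}$ and keeps the bookkeeping lighter.

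There is, however, a genuine gap in your final step. The three trace conditions you extract are \emph{not} enough to force all four $\mathcal{B}$-coordinates into $\mathbb{Z}$: they cut out a lattice strictly containing the $\mathbb{Z}$-span of $\mathcal{B}$. Concretely, with $A=4a$, $B=4b$, $C=4c$, $E=4e$, the trace relations say exactly that $A,B,C,E\in\mathbb{Z}$ share a common parity; translating to $\mathcal{B}$-coordinates gives $x_2=E$, $x_3=(B-E)/2$, $x_4=(C-(m/d)E)/2$, all integral, but only
\[
x_1=\tfrac{1}{4}\bigl(A-B-C+(m/d)E\bigr),
\]
which the parity constraint forces into $\tfrac{1}{2}\mathbb{Z}$, not $\mathbb{Z}$. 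The element $\alpha=1/2$ (so $A=2$, $B=C=E=0$) satisfies all three trace conditions yet has $x_1=1/2$. So the ``parity relations arising from the three trace arguments'' do \emph{not} match exactly, contrary to what your last paragraph anticipates.

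The paper closes this gap with one extra observation you omit: once $x_2,x_3,x_4\in\mathbb{Z}$ are secured (and $\mathcal{B}\subset\mathcal{O}_K$, which you did verify), the difference
\[
x_1=\alpha-x_2\cdot\frac{(1+\sqrt{-m})(1+\sqrt{\ell})}{4}-x_3\cdot\frac{1+\sqrt{-m}}{2}-x_4\cdot\frac{1+\sqrt{n}}{2}
\]
lies in $\mathcal{O}_K\cap\mathbb{Q}=\mathbb{Z}$. Adding this sentence repairs your argument completely.
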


\begin{proof}
Let there exist $a_1, a_2, a_3, a_4\in \mathbb{Q}$, 
$$
a_1 +  a_2 \frac{(1+\sqrt{-m})(1+\sqrt{\ell})}{4}+a_3 \frac{1+\sqrt{-m}}{2}+ a_4\frac{1+\sqrt{n}}{2} = 0.
$$
Then $ a_4=0,  \frac{a_2(1+\sqrt{\ell})}{2} +a_3=0$ and  $a_1+\frac{a_2(1+\sqrt{\ell})}{4}+\frac{a_3}{2}=0$. These together imply that $a_1=a_2=a_3=a_4=0$, which shows that  
$\mathcal{B}$ is a basis for $K$ over $\mathbb{Q}$. Thus for any $\alpha \in \mathcal{O}_K$, we can write
$$ \alpha = x_1 +x_2 \frac{(1+\sqrt{-m})(1+\sqrt{\ell})}{4}+  x_3 \frac{1+\sqrt{-m}}{2}+ x_4 \frac{1+\sqrt{n}}{2}.$$ 
Thus it suffices to show that $x_i (i=1,\ldots,4)\in \mathbb{Z}$ to prove that $\mathcal{B}$ is indeed an integral basis for $\mathcal{O}_K$. 

Note that $\ell\equiv 1 \pmod 4$ and $K$ has three quadratic subfields ${K}_{1}= \mathbb{Q}(\sqrt{-m}), {K}_{2}= \mathbb{Q}(\sqrt{n})$ and ${K}_{3}= \mathbb{Q}(\sqrt{\ell})$. Now,
\begin{eqnarray*}
Tr_{K/{K}_{1}}(\alpha) &=& \left(2x_{1} + x_{4})+(x_{2} + 2x_{3}\right)\frac{1+\sqrt{-m}}{2} ;\\
Tr_{K/ {K}_{2}}(\alpha) &=& \left(2x_{1} + \frac{1-\frac{m}{(m,n)}}{2}x_{2} + x_{3}\right) + \left( \frac{m}{(m,n)}x_{2}+ 2x_{4}\right)\frac{1+\sqrt{n}}{2} ;\\
Tr_{K /{K}_{3}}(\alpha)&=&\left(2x_{1}+ x_{3}+x_{4}\right)+x_{2} \frac{1+\sqrt{\ell}}{2} .
\end{eqnarray*}
Since $Tr_{K/{K}_{3}}(\alpha) \in \mathcal{O}_{{K}_{3}} $, so that
$
2x_{1}+x_{3}+x_{4} \in \mathbb{Z}.
$
Similarly $Tr_{K/ {K}_{1}}(\alpha) \in \mathcal{O}_{{K}_{1}}$ gives that $
2x_{1}+x_{4} \in \mathbb{Z}$.
These together imply  $x_{3} \in \mathbb{Z}$.  

Again $Tr_{K /{K}_{2}}(\alpha) \in \mathcal{O}_{{K}_{2}}$ gives
$
2x_{1} + \left(\frac{1-\frac{m}{(m,n)}}{2}\right) x_{2}+ x_{3} \in \mathbb{Z}.
$
This further Implies $2x_{1} \in  \mathbb{Z} $, and hence $ x_{4} \in \mathbb{Z}$ too.
Therefore, 
$$
x_1 = \alpha -x_2 \frac{(1+\sqrt{-m})(1+\sqrt{\ell})}{4}-  x_3 \frac{1+\sqrt{-m}}{2}- x_4 \frac{1+\sqrt{n}}{2},
$$
which implies that $x_{1} \in \mathcal{O}_{K} \cap  \mathbb{Q} = \mathbb{Z}$. 
%This completes the proof. 
\end{proof}
We now recall the following lemmas which also will be used in the sequel.
%result from \cite{ZJ} which is also useful in the proof of Theorem \ref{thm1}. 
\begin{lem}[{\cite[Lemma 2.2]{ZJ}}] \label{lemmaZJ}
Let $K = \mathbb{Q}(\sqrt{-d})$ be an imaginary quadratic field. If $d \equiv 3 \pmod 4 $, then $s(\mathcal{O}_{K})= 2$ if and only if $x^{2} - dy^{2} = -2$ is  solvable in rational integers.
\end{lem}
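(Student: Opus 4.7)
The plan is to prove both implications separately. For $(\Leftarrow)$: Given $x, y \in \mathbb{Z}$ with $x^2 - dy^2 = -2$, reducing mod $2$ forces $x \equiv y \pmod 2$, and the both-even case is ruled out by the $2$-adic valuation of $-2$; hence $x, y$ are both odd. Since $\mathcal{O}_K = \mathbb{Z}[(1+\sqrt{-d})/2]$, both $\alpha = (x+y\sqrt{-d})/2$ and $\beta = (x-y\sqrt{-d})/2$ lie in $\mathcal{O}_K$, and a direct computation (the cross terms cancel) yields $\alpha^2 + \beta^2 = (x^2 - dy^2)/2 = -1$. Combined with $s(\mathcal{O}_K) \geq 2$ (since $K \neq \mathbb{Q}(i)$), this gives $s(\mathcal{O}_K) = 2$.

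For $(\Rightarrow)$: Suppose $-1 = \alpha^2 + \beta^2$ in $\mathcal{O}_K$. Setting $\omega = (1+\sqrt{-d})/2$, we have $\omega^2 = \omega - N$ where $N := (1+d)/4 \in \mathbb{Z}$. Writing $\alpha = u + v\omega$ and $\beta = s + t\omega$ in this integral basis and comparing the coefficients of $1$ and $\omega$, a short manipulation (introducing $p := v + 2u$, $q := t + 2s$, multiplying the rational-coefficient equation by $4$, and substituting via the $\omega$-coefficient equation) transforms the system into
\[
p^2 + q^2 - d(v^2 + t^2) = -4, \qquad vp + tq = 0.
\]
The degenerate case $(v,t) = (0,0)$ is ruled out immediately since it would force $u^2 + s^2 = -1$ in $\mathbb{Z}$. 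Otherwise, setting $g = \gcd(v,t) > 0$ and $v = gv_0$, $t = gt_0$ with $\gcd(v_0, t_0) = 1$, the orthogonality condition forces $(p,q) = r(t_0, -v_0)$ for some $r \in \mathbb{Z}$, and substituting yields
\[
(r^2 - dg^2)(v_0^2 + t_0^2) = -4.
\]
Primitivity of $(v_0, t_0)$ then restricts $v_0^2 + t_0^2 \in \{1, 2\}$.

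The main obstacle is ruling out $v_0^2 + t_0^2 = 1$, which would give $r^2 - dg^2 = -4$. By symmetry, reducing to $(v_0, t_0) = (1, 0)$, the defining relations force both $g$ and $r$ to be even, yielding $r_1^2 - dg_1^2 = -1$ after dividing by $4$. But modulo $4$, since $d \equiv 3 \pmod 4$, this becomes $r_1^2 + g_1^2 \equiv 3 \pmod 4$, which is impossible since sums of two squares lie in $\{0,1,2\} \pmod 4$. Thus $v_0^2 + t_0^2 = 2$, and $(r, g)$ is the desired integer solution to $x^2 - dy^2 = -2$.
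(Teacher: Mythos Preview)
The paper does not supply its own proof of this lemma; it is quoted verbatim from \cite[Lemma~2.2]{ZJ} and used as a black box. Your argument is correct and entirely self-contained, so there is nothing in the paper to compare against line by line.

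A brief check of the one step you labelled ``the main obstacle'': in the case $v_0^2+t_0^2=1$ you reduce by symmetry to $(v_0,t_0)=(1,0)$, whence $t=0$, $v=g$, and $(p,q)=r(0,-1)$. Then $p=2u+v=0$ forces $g=v=-2u$ even, and $q=2s+t=2s$ forces $r=-q=-2s$ even, exactly as you claim; the swap $(u,v,p)\leftrightarrow(s,t,q)$ handles $(v_0,t_0)=(0,\pm1)$, and a sign flip in $r$ handles $(-1,0)$. After halving you get $r_1^2-dg_1^2=-1$, and since $d\equiv 3\pmod 4$ this reads $r_1^2+g_1^2\equiv 3\pmod 4$, impossible. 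The remaining case $v_0^2+t_0^2=2$ then hands you the integer solution $(r,g)$ to $x^2-dy^2=-2$. For the converse direction, your parity argument and the identity
\[
\left(\frac{x+y\sqrt{-d}}{2}\right)^{2}+\left(\frac{x-y\sqrt{-d}}{2}\right)^{2}=\frac{x^2-dy^2}{2}=-1
\]
are exactly what is needed, and $s(\mathcal{O}_K)\ge 2$ holds because $d\equiv 3\pmod4$ rules out $K=\mathbb{Q}(i)$. Your write-up could stand in for the omitted reference.
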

\begin{lem}[{\cite[See p. 159]{OM}}]\label{loc}
Let $ \alpha $ be an integer in the local field $F$ with the uniformizer $ \pi $. Then there is an integer $ \beta $ such that
\begin{equation*}
1+ 4\pi\alpha = (1+2\pi\beta)^{2}.
\end{equation*}
\end{lem}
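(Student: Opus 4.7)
The plan is to recognize this as an immediate consequence of (the strong form of) Hensel's lemma, applied to the polynomial
$$
f(X) = X^{2} - (1 + 4\pi\alpha) \in \mathcal{O}_{F}[X],
$$
with the initial approximation $X_{0} = 1$. A direct computation gives $f(X_{0}) = -4\pi\alpha$ and $f'(X_{0}) = 2$. Writing $v$ for the normalized valuation on $F$, the first step is to check the Hensel hypothesis $v(f(X_{0})) > 2\, v(f'(X_{0}))$. Since $\alpha \in \mathcal{O}_{F}$, one has $v(f(X_{0})) = v(4) + v(\pi) + v(\alpha) \geq v(4) + 1$, while $2\, v(f'(X_{0})) = v(4)$, so the strict inequality holds independently of whether the residue characteristic of $F$ is $2$ or not.

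The second step is to invoke the refined form of Hensel's lemma, which then furnishes a root $X \in \mathcal{O}_{F}$ of $f$ satisfying the quantitative estimate
$$
v(X - X_{0}) \;\geq\; v(f(X_{0})) - v(f'(X_{0})) \;=\; v(4\pi\alpha) - v(2) \;\geq\; v(2\pi).
$$
Setting $X - 1 = 2\pi\beta$, this estimate forces $\beta \in \mathcal{O}_{F}$, and substituting into $f(X) = 0$ yields exactly $(1 + 2\pi\beta)^{2} = 1 + 4\pi\alpha$, which is what the lemma asserts.

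The only subtle point, and the main (minor) obstacle, is that the naïve statement of Hensel's lemma that requires $v(f'(X_{0})) = 0$ is not available when $F$ has residue characteristic $2$, precisely the case of interest for squares. One must use the sharper version that only demands $v(f(X_{0})) > 2\, v(f'(X_{0}))$; this is exactly the form proved in O'Meara, which is why the lemma is quoted from that source. As an alternative to citing Hensel, one can construct $\beta$ by a direct successive approximation: starting from $\beta_{0} = \alpha$ and setting $\beta_{k+1} = \beta_{k} + \bigl(\alpha - \beta_{k} - \pi \beta_{k}^{2}\bigr)$, the error $\alpha - \beta_{k} - \pi \beta_{k}^{2}$ has its valuation doubled at each step, so the sequence converges in $\mathcal{O}_{F}$ to the desired $\beta$.
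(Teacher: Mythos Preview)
The paper does not supply its own proof of this lemma; it merely quotes the statement from O'Meara's book. Your argument via the strong form of Hensel's lemma is correct and is essentially the same as O'Meara's treatment: on p.~159 of \cite{OM} this is the ``Local Square Theorem,'' and the proof given there is exactly the Newton-type successive approximation you sketch at the end (solve $\beta + \pi\beta^{2} = \alpha$ iteratively). So there is nothing to compare against in the paper itself, and your proof is the expected one.

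One small quibble in your closing remark: writing $e_{k} = \alpha - \beta_{k} - \pi\beta_{k}^{2}$, your recursion gives $e_{k+1} = -\pi e_{k}(2\beta_{k} + e_{k})$, so $v(e_{k+1}) \geq v(e_{k}) + 1$ rather than $v(e_{k+1}) \geq 2\,v(e_{k})$. The valuation is guaranteed to increase by at least one, not to double; but this is of course still enough for convergence in $\mathcal{O}_{F}$, and your main Hensel argument is unaffected.
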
 
\begin{proof}[\bf Proof of Theorem \ref{thm1}]
Recall that $\mathfrak{S}_K$ is the set of all elements in $\mathcal{O}_K$ which are sums of squares in $\mathcal{O}_K$ and 
thus $\mathfrak{S}_{K}$ is a subring of $\mathcal{O}_{K}$. Therefore it is sufficient to show that $\mathcal{O}_{K} \subseteq \mathfrak{S}_{K}$ to complete the proof.
 
 By the four square theorem, $\mathbb{N}\subseteq \mathfrak{S}_{K}$. Also $-1\in \mathfrak{S}_K$ 
 by Theorem \ref{thmM} and hence $\mathbb{Z}\subseteq \mathfrak{S}_K$. 
Since $\mathcal{B}$ (in Proposition \ref{prop1}) is an integral basis for $\mathcal{O}_K$, any $\alpha\in \mathcal{O}_K$ can be expressed as
\begin{equation}\label{eqal}
\alpha = x_1 + x_2 \left(\frac{(1+\sqrt{-m})(1+\sqrt{ \ell})}{4}\right)+x_{3} \left(\frac{1+\sqrt{-m}}{2}\right)+ x_{4} \left(\frac{1+\sqrt{n}}{2}\right),
\end{equation}
where $ x_{i}
\in \mathbb{Z}, i= 1, \cdots, 4$ and $\ell= \frac{mn}{\gcd(m,n)^{2}}$. 

Now
\begin{align}\label{eqx}
\begin{cases}
 \dfrac{1+\sqrt{-m}}{2} = \left(\dfrac{1+\sqrt{-m}}{2}\right)^{2} + \dfrac{m+1}{4} \\
\dfrac{1+\sqrt{n}}{2} = \left(\dfrac{1+\sqrt{n}}{2}\right)^{2} -\dfrac{n-1}{4} \\
 \dfrac{1+\sqrt{\ell}}{2} = \left(\dfrac{1+\sqrt{\ell}}{2}\right)^{2} - \dfrac{\ell-1}{4} .
 \end{cases}
\end{align}
Since $m \equiv 3 \pmod 4 $ and $ n, \ell \equiv 1 \pmod 4$, one has  $\frac{m+1}{4}, \frac{n-1}{4}, \frac{\ell-1}{4} \in \mathbb{Z}$, and thus $\frac{m+1}{4}, \frac{n-1}{4}, \frac{\ell-1}{4}\in\mathfrak{S}_K$. Therefore using \eqref{eqx}, 
$$
 \frac{(1+ \sqrt{-m})(1+ \sqrt{\ell})}{4}, \frac{1+\sqrt{-m}}{2},\frac{1+\sqrt{n}}{2} \in \mathfrak{S}_{K}.
$$
 The proof is now completed by \eqref{eqal}.
 \end{proof}
%\textbf{Remarks} : 
%\begin{remark}
%For all other complex bi-quadratic fields $K$ if we consider
%allow $\frac{1}{2} $ inside the ring of integer 
%$\mathcal{O}_{K}\cup \{\frac{1}{2}\}$ (so-called ring of $S$-integers) then we can conclude that every element in $\mathcal{O}_{K}$ can be written as sum of integral squares by the elements of this ring of $S$-integers.
%\end{remark}

\begin{proof}[Proof of Theorem \ref{thm2}]
%It follows from definitions that 
Clearly $s(\mathcal{O}_{K}) \leq   g(\mathcal{O}_{K})$.
 
As $\alpha \in \mathcal{O}_{K}$ so do $ -\alpha$.
%\in \mathcal{O}_{K}$. 
Hence there exists $\beta_{1},\beta_{2},\cdots,\beta_{t} \in \mathcal{O}_{K}$ such that
\begin{equation*}
-\alpha = \beta_{1}^{2}+\beta_{2}^{2}+...+\beta_{t}^{2}.
\end{equation*}
Then
\begin{eqnarray*}
\alpha &= &\left( \sum_{1\leq i \leq t}\beta_{i} + \sum_{1\leq i \leq j \leq t} \beta_{i}\beta_{j} +1\right)^{2} - \left( \sum_{1\leq i \leq t}\beta_{i} + \sum_{1\leq i \leq j \leq t} \beta_{i}\beta_{j}\right)^{2}\\
& -& \left(\sum_{1\leq i \leq t}\beta_{i}  + 1\right)^{2}.
\end{eqnarray*}
We write,
$\gamma_1=\sum\limits_{1\leq i \leq t}\beta_{i} + \sum\limits_{1\leq i \leq j \leq t} \beta_{i}\beta_{j} +1$, $\gamma_2=\sum\limits_{1\leq i \leq t}\beta_{i} + \sum\limits_{1\leq i \leq j \leq t} \beta_{i}\beta_{j}$ and $\gamma_3=\sum\limits_{1\leq i \leq t}\beta_{i}  + 1$. Then
$\gamma_{i} (i=1,2,3) 
\in \mathcal{O}_{K}$ which satisfy
\begin{equation*}
\alpha = \gamma_{1}^{2}+(-1)(\gamma_{2}^{2}+\gamma_{3}^{2}).
\end{equation*}
If $s(\mathcal{O}_{K}) = 2m$ for some positive integer $m$, then
\begin{eqnarray*}
\alpha &= &\gamma_{1}^{2}+\left(\sum_{i=1}^{2m}\epsilon_{i}^{2}\right) (\gamma_{2}^{2}+\gamma_{3}^{2})\\
&=&\gamma_{1}^{2} + \sum_{i=1}^{2m}\left( (\epsilon_{i}\gamma_{2}+\epsilon_{i+1}\gamma_{3})^{2} + (\epsilon_{i}\gamma_{3} - \epsilon_{i+1}\gamma_{2})^{2}\right),
\end{eqnarray*}
with $i$ varies over odd integers.
This shows that $g(\mathcal{O}_{K}) \leq 2m+1.$ 

Analogously, if $s(\mathcal{O}_{K}) = 2m + 1$ for some positive integer $m$, then
\begin{eqnarray*}
\alpha &=& \gamma_{1}^{2}+\left(\sum_{i=1}^{2m}\epsilon_{i}^{2}\right)(\gamma_{2}^{2}+\gamma_{3}^{2}) + \epsilon_{2m+1}^2 (\gamma_{2}^{2}+\gamma_{3}^{2})\\
&=&\gamma_{1}^{2} + \sum_{i=1}^{2m}\left(\epsilon_{i}\gamma_{2}+\epsilon_{i+1}\gamma_{3})^{2} + (\epsilon_{i}\gamma_{3} - \epsilon_{i+1}\gamma_{2})^{2}\right) + \epsilon_{2m+1}^2(\gamma_{2}^{2}+\gamma_{3}^{2}),
\end{eqnarray*}
with $i$ varies over odd integers. This shows that $g(\mathcal{O}_{K})\leq 2m+3 $.
%This completes the proof.
\end{proof}
\begin{proof}[Proof of Corollary \ref{cor1}]
%Note that by 
Theorem \ref{thm1} gives that $ \mathfrak{S}_{K} = \mathcal{O}_{K}$. 
Assume that $s( \mathcal{O}_{K}) = 2$. Then by Theorem \ref{thm2}, we  conclude that every element of $\mathcal{O}_{K} $ can be expressed as a sum of squares. Thus it remains to show that  there exists an element in $\mathcal{O}_{K}$ which is not a sum of two integral squares.

Let $L = K (\sqrt{-1})$.  
Assume that $\mathcal{P}$ is a prime ideal above $2$ in $K$ and $ \mathcal{Q }$ is a prime ideal above $\mathcal{ P}$ in $L$. Then $\mathcal{ P}$ is totally ramified in $L$. Let $ L_{\mathcal{Q}}$ and $K_{\mathcal{P}} $ denote the completions of $L$ and $K$ at $\mathcal{Q}$ and $\mathcal{P}$ respectively. Then [$L_{\mathcal{Q}}: K_{\mathcal{P}} ] = 2$ and by the local class field theory, ${K}_{\mathcal{P}}^{*}/{N}(L_{\mathcal{Q}}^{*}) \cong  \text{Gal}( {L}_{\mathcal{Q}} / {K}_{\mathcal{P}} )$. Thus [${K}_{\mathcal{P}}^{*} : {N}(L_{\mathcal{Q}}^{*} ]= 2$. Assume that every element in $ \mathcal{O}_{{K}}$ is a sum of two integral squares in ${K}$. However, ${K}$ is dense in ${K}_{\mathcal{P}} $,  thus by Lemma \ref{loc}, we get that every element of ${K}_{\mathcal{P}} $ is a sum of two squares in ${K}_{\mathcal{P}} $, that is , the norm of ${L}_{\mathcal{Q}}^{*} \rightarrow {K}_{\mathcal{P}}^{*}$ is surjective, which is a contradiction. Therefore $g(\mathcal{O}_{{K}})=3$.
%This completes the proof of the theorem.
\end{proof}
%From the preceding theorem we will deduce the following result.

\begin{proof}[Proof of Theorem \ref{cor2}]
(I) We first prove that $\ x^{2} - py^{2} = -2$ is solvable in rational integers. Let $ (x_{0},y_{0}) $ be a positive integral solution of the equation $\ x^{2} - py^{2} = 1$ 
%with $ x_{0} > 0 ,y_{0} > 0 $ 
with $y_{0}$ is minimal. Then 

\begin{equation}\label{eqn:1}
(x_{0} + 1)(x_{0} - 1 ) = py_{0}^2.
\end{equation}
Two cases needed to be considered.\\
Case 1:~  If  $2 \mid y_{0}$, then $ x_{0} $ is odd, so we have $\frac{x_{0}+1}{2}. \frac{x_0-1}{2} = p (\frac{y_0}{2})^2$. Since  $( \frac{x_0 + 1}{2}, \frac{x_0-1}{2}) = 1$, there exist $a,b \in \mathbb{Z}_{>0}$ such that 
\begin{align*}
\begin{cases}
\frac{x_0 + 1}{2} = pa^{2},\\ 
\frac{x_0 - 1}{2} = b^2, 
\end{cases}
\end{align*}
or
\begin{align*}
\begin{cases}
\frac{x_0 + 1}{2} = a^{2},\\ 
\frac{x_0 - 1}{2} = pb^2,
\end{cases}
\end{align*}
where $ab = \frac{y_0}{2}$. In the former case, $b^2 - pa^2 = -1$  yields $(\frac{-1}{p}) = 1$. This is a contradiction to  the assumption that $p \equiv 3 \pmod 8$.  In the latter case, $a^2 - pb^2 = 1$ with $ 0< b < y_0$ , which leads to a contradiction.

Case2:~ If $2 \nmid y_{0} $ then $ x_0 $ is even. So we have $(x_0 -1 , x_0 + 1)= 1$. Now combining with \eqref{eqn:1} there exist $a,b \in \mathbb{Z}_{>0} $ such that 
\begin{align*}
\begin{cases}
x_0 + 1 = a^{2},\\ 
x_0 - 1 = pb^2, 
\end{cases}
\end{align*}
or
\begin{align*}
\begin{cases}
x_0 + 1 = pa^{2},\\ 
x_0 - 1 = b^2.
\end{cases}
\end{align*}
In the former case, $ a^2 - pb^2 = 2$ and this yields $(\frac{2}{p}) = 1$. Again that  contradicts $p\equiv 3 \pmod 8$. In the latter case, we obtain
\begin{equation*}
b^2 - pa^2 = -2.
\end{equation*}
Implying that  $x^2 - py^2 = -2$ is solvable in rational integers.
Now  $s(\mathcal{O_{\mathbb{Q(\sqrt{-p})})}}) = 2$ by using lemma \ref{lemmaZJ}.
 Hence $ s(\mathcal{O}_{K})= 2$ and by Corollary \ref{cor1}, we conclude that $g(\mathcal{O}_{K}) = 3.$

(II) Let $(x_{0}, y_{0}) $  be a positive  integral solution of $x^2 - pqy^2 = 1$ with 
%$ x_{0} > 0 , y_{0} > 0 $ and 
$ y_{0} $ is minimal. Then 
\begin{equation}\label{eqn:2}
(x_{0} + 1)(x_{0} - 1 ) = pqy_{0}^2.
\end{equation}
Case1:~ If $2 \mid y_0$ implies $ x_0 $ is odd and thus $\left(\frac{x_0-1}{2}. \frac{x_0+1}{2}\right) = pq(\frac{y_0}{2})^2 $. Since  $( \frac{x_0 + 1}{2}, \frac{x_0-1}{2}) = 1$, there exist $ a,b \in \mathbb{Z}_{>0} $ such that,
\begin{align*}
&\begin{cases}
\frac{x_0 + 1}{2} = pqa^{2},\\ 
\frac{x_0 - 1}{2} = b^2, 
\end{cases} &\mbox{or}
&&\begin{cases}
\frac{x_0 + 1}{2} = qa^{2},\\ 
\frac{x_0 - 1}{2} = pb^2, 
\end{cases} &\mbox{or}
&&\begin{cases}
\frac{x_0 + 1}{2} = pa^{2},\\ 
\frac{x_0 - 1}{2} = qb^2,
\end{cases} &\mbox{or}
&&\begin{cases}
\frac{x_0 + 1}{2} = a^{2},\\ 
\frac{x_0 - 1}{2} = pqb^2
\end{cases} 
\end{align*}
where $ab = \frac{y_0}{2}.$ As before,

(i)~In the first case one deduces $b^2 - pqa^2 = -1$. This yields $ (\frac{-1}{p}) = 1 $. Now  by utilising the assumption $p \equiv 3 \pmod 8 $ we see that it is impossible.\\
(ii)~In the second case, $pb^2 - qa^2 = -1$. This yields $ (\frac{q}{p}) = 1$ which leads to a contradiction.\\
(iii)~In the third case,  $qb^2 - pa^2 = -1 $. This yields $ (\frac{p}{q}) = 1$ . Since $ p \equiv 3 \pmod 8 $ and $ q \equiv 1 \pmod 8   $, we have $( \frac{q}{p}) = 1$, a contradiction.\\
(iv)~In the last case, $a^2 - pqb^2 = 1$ with $ 0 < b < y_0 $. This contradicts the assumption that $ y_0 $ is in fact minimal.\\

Case2:~ If $2 \nmid y_0$, then $ x_0 $ is even and one has $ (x_0 -1 , x_0 + 1)= 1$. Combining with \eqref{eqn:2} there exist $ a,b \in \mathbb{Z}_{>0}$ such that
\begin{align*}
&\begin{cases}
x_0 + 1 = pqa^{2},\\ 
x_0 - 1 = b^2, 
\end{cases} &\mbox{or}
&&\begin{cases}
x_0 + 1 = pa^{2},\\ 
x_0 - 1 = qb^2, 
\end{cases} &\mbox{or}
&&\begin{cases}
x_0 + 1 = qa^{2},\\ 
x_0 - 1 = pb^2,
\end{cases} &\mbox{or}
&&\begin{cases}
x_0 + 1 = a^{2},\\ 
x_0 - 1 = pqb^2, 
\end{cases} 
\end{align*} 
In the first case we obtain
\begin{equation*}
b^2 - pa^2 = -2.
\end{equation*}
This implies that $x^{2} - pqy^{2} = -2$ is solvable in rational integers.
In the remaining cases one checks the impossibility as was dealt in Case2 of (I).

%One readily verifies that, similar to the proof of II. the remaining three cases are impossible. We omit the straightforward details.
Now Lemma \ref{lemmaZJ} gives  $s(\mathcal{O}_{\mathbb{Q}(\sqrt{-p})}) = 2$. Hence $s(\mathcal{O}_{K} )= 2$ and subsequently using Corollary \ref{cor1} one has $g(\mathcal{O}_{K}) = 3.$

(III)  Let $ (x_{0},y_{0}) $  be a positive integral solution of $x^2 - pqry^2 = 1$ with 
%$ x_{0} > 0 ,y_{0} > 0 $ and 
$ y_{0} $ being minimal. Then 
\begin{equation}\label{eqn:3}
(x_{0} + 1)(x_{0} - 1 ) = pqry_{0}^2.
\end{equation}
Case1:~  If $ 2 \mid y_0 $, then $x_0$ is odd, so we have $\frac{x_0-1}{2}. \frac{x_0+1}{2} = pqr(\frac{y_0}{2})^2$. Since  $( \frac{x_0 + 1}{2}, \frac{x_0-1}{2}) = 1$ there exist $ a,b \in \mathbb{Z}_{>0}$, such that,
\begin{align*}
&\begin{cases}
\frac{x_0 + 1}{2} = pqra^{2},\\ 
\frac{x_0 - 1}{2} = b^2, 
\end{cases} &\mbox{or}
&&\begin{cases}
\frac{x_0 + 1}{2} = pqa^{2},\\ 
\frac{x_0 - 1}{2} = rb^2, 
\end{cases} &\mbox{or}
&&\begin{cases}
\frac{x_0 + 1}{2} = pra^{2},\\ 
\frac{x_0 - 1}{2} = qb^2,
\end{cases} &\mbox{or}
\end{align*}
\begin{align*}
&\begin{cases}
\frac{x_0 + 1}{2} = qra^{2},\\ 
\frac{x_0 - 1}{2} = pb^2,
\end{cases} &\mbox{or}
&&\begin{cases}
\frac{x_0 + 1}{2} = pa^{2},\\ 
\frac{x_0 - 1}{2} = qrb^2, 
\end{cases} &\mbox{or}
&&\begin{cases}
\frac{x_0 + 1}{2} = qa^{2},\\ 
\frac{x_0 - 1}{2} = prb^2, 
\end{cases} &\mbox{or}
\end{align*}
\begin{align*}
&\begin{cases}
\frac{x_0 + 1}{2} = ra^{2},\\ 
\frac{x_0 - 1}{2} = pqb^2, 
\end{cases} &\mbox{or}
&&\begin{cases}
\frac{x_0 + 1}{2} = a^{2},\\ 
\frac{x_0 - 1}{2} = pqrb^2 
\end{cases} 
\end{align*}
where $ab = \frac{y_0}{2}$.
Proceeding as before (II, Case1), it is concluded that these eight cases are impossible. To get to these impossibilities  one needs to utilise the facts that $p \equiv q \equiv r \equiv 3 \pmod 8 $, $\left(\frac{p}{q}\right)=\left(\frac{q}{r}\right)=\left(\frac{r}{p}\right)= -1$ and that $(x_0,y_0)$ is a minimal solution.

Case2:~ If $2 \nmid y_0$, then $ x_0 $ is even and one has  $(x_0 -1 , x_0 + 1)= 1$. Combining this fact with \eqref{eqn:3} yields that there exist $a,b \in \mathbb{Z}_{>0}$ such that
\begin{align*}
&\begin{cases}
x_0 + 1 = pqra^{2},\\ 
x_0 - 1 = b^2, 
\end{cases} &\mbox{or}
&&\begin{cases}
x_0 + 1 = pqa^{2},\\ 
x_0 - 1 = rb^2, 
\end{cases} &\mbox{or}
&&\begin{cases}
x_0 + 1 = pra^{2},\\ 
x_0 - 1 = qb^2,
\end{cases} &\mbox{or}
\end{align*}
\begin{align*}
&\begin{cases}
x_0 + 1 = qra^{2},\\ 
x_0 - 1 = pb^2, 
\end{cases} &\mbox{or}
&&\begin{cases}
x_0 + 1 = pa^{2},\\ 
x_0 - 1 = qrb^2, 
\end{cases} &\mbox{or}
&&\begin{cases}
x_0 + 1 = qa^{2},\\ 
x_0 - 1 = prb^2,
\end{cases} &\mbox{or}
\end{align*} 
\begin{align*}
&\begin{cases}
x_0 + 1 = ra^{2},\\ 
x_0 - 1 = pqb^2, 
\end{cases} &\mbox{or}
&&\begin{cases}
x_0 + 1 = a^{2},\\ 
x_0 - 1 = pqrb^2. 
\end{cases}
\end{align*}
In the first case
\begin{equation*}
b^2 - pqra^2 = -2.
\end{equation*}
Implying that $x^{2} - pqry^{2} = -2$ is solvable in rational integers. It is easy to check that the remaining cases are impossible on the similar lines as before.

Lemma \ref{lemmaZJ} gives that $s(\mathcal{O}_{\mathbb{Q}(\sqrt{-p})}) = 2$ and thus $s(\mathcal{O}_{K}) = 2$. Finally appealing to Corollary\ref{cor1} gives $g(\mathcal{O}_{K}) = 3$. 
%This completes the proof of the corollary.
\end{proof}

\section*{Acknowledgments}
The author expresses her gratitude to her adviser  Prof.  Kalyan Chakraborty  for going through the manuscript and revising it throughly. The author is also indebted  to Dr. Azizul Hoque for introducing her into this beautiful area of research, and for many fruitful comments and valuable suggestions.

\end{document}